\theoremstyle{definition}
\newtheorem{defn}{Definition}[section]
\newtheorem{lema}[defn]{Lemma}
\newtheorem{teo}[defn]{Theorem}
\newtheorem{obs}[defn]{Remark}
\DeclareMathOperator{\End}{End}
\DeclareMathOperator{\inte}{int}
\DeclareMathOperator{\dive}{div}
\DeclareMathOperator{\tr}{tr}
\DeclareMathOperator{\hess}{Hess}
\DeclareMathOperator{\sn}{sn}
\DeclareMathOperator{\cn}{cn}
\title{Stable free boundary surfaces with constant extrinsic curvature in $3$-dimensional space forms}
\author{Leonardo Damasceno\thanks{Instituto de Matemática, Universidade Federal do Rio de Janeiro, CP 68530, CEP 21941-909, Rio de Janeiro, Brazil. E-mail: \texttt{damasceno@im.ufrj.br}}\and Maria Fernanda Elbert\thanks{Instituto de Matemática, Universidade Federal do Rio de Janeiro, CP 68530, CEP 21941-909, Rio de Janeiro, Brazil. E-mail: \texttt{fernanda@im.ufrj.br}}}
\begin{document}
\maketitle

\begin{abstract}
{In this paper we use the notion of stability for free boundary surfaces with constant higher order mean curvature to obtain rigidity results for $H_2$-surfaces with free boundary of a geodesic ball of a simply connected $3$-dimensional space form or a slab of $\mathbb{R}^3$.}
\end{abstract}

\unmarkedfntext{Keywords: Isometric immersions, free boundary, capillary, higher order mean curvature}
\unmarkedfntext{2000 Mathematical Subject Classification: 53C42, 53A10.}
\unmarkedfntext{The authors were partially supported by CAPES.}

\section{Introduction}\label{um}

{The extrinsic curvature $H_2$ of a surface is the product of its principal curvatures. As a consequence of the Gauss equation, when immersed into a $3$-manifold with constant curvature equal to $c \in \mathbb{R}$, the intrinsic curvature $K$ and the extrinsic curvature are related via $H_2=K-c$. In particular, when the ambient space is the $3$-dimensional Euclidean space $\mathbb{R}^3$ both notions coincide.

The notion of stability of surfaces with constant mean curvature has been studied by mathematicians throughout the last four decades. It is known that minimal hypersurfaces can be seen as critical points of the volume functional, whereas the hypersurfaces with constant mean curvature (CMC) can also be described on a variational setting. They are critical points of the area functional with respect to variations which preserve volume. Stability, then, means that they are a minimum of area for such variations.

Given a region $\Omega$ of a Riemannian manifold $M$, a hypersurface $\Sigma \subseteq \Omega$ whose boundary $\partial\Sigma$ is contained into $\partial\Omega$ is said to have free boundary if its boundary intersects orthogonally $\partial\Omega$. In a more general situation, when the contact angle is constant along the intersection, such submanifold is said to be capillary. Minimal or CMC capillary hypersurfaces supported on $\partial\Omega$ can also be seen as critical points of the volume functional but, in this case, it is only considered variations which maintains the boundary on the hypersurface which supports it. A number of results has been proved for the case of the ambient space has dimension equal to $3$ \cite{souam1997stability, ros1995stability, nunes2017stable, ros1997stability}.

When considering higher order mean curvatures, the notion of stability does not come from a variational setting. Despite that, for closed hypersurfaces (fixed bounded variations) the second author and Barbara Nelli proposed a notion of stability for such hypersurfaces by {using the} linearization of the corresponding PDE (see \cite{elbert2019note}) and, in \cite{damasceno2022stable}, the first and the second authors proposed a notion of stability for the free boundary and capillary cases).

The main goal of this paper is to prove the following results:
\begin{teo}\label{003.1}
Let $\Sigma^2$ be a closed disk and $\varphi : \Sigma \rightarrow \Omega \subseteq \mathbb{M}^3(c)$ be a $H_2$-surface with free boundary in $\partial\Omega$ and $H_2>0$. Then $\varphi(\Sigma)$ is totally umbilical.
\end{teo}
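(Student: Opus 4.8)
The plan is to exhibit on $\Sigma$ a holomorphic quadratic differential whose zero set is exactly the umbilic set, to use the free boundary condition to make it ``real'' along $\partial\Sigma$, and then to kill it by a reflection argument. First I would set things up intrinsically. Since $\dim\Sigma=2$, the Gauss equation gives $H_2=K-c$, so $H_2$ constant means $\Sigma$ has constant intrinsic curvature; but what I really need from it is that $\det A=H_2>0$, which forces the two principal curvatures to have a common sign, so that after reversing the unit normal if needed the shape operator $A$ is positive definite and $\mathrm{II}:=\langle A\cdot,\cdot\rangle$ is a genuine Riemannian metric on $\Sigma$. In this language, ``$\varphi(\Sigma)$ totally umbilical'' is equivalent to ``$\langle\cdot,\cdot\rangle$ is pointwise conformal to $\mathrm{II}$''.

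Next I would look at the identity map $\iota\colon(\Sigma,\mathrm{II})\to(\Sigma,\langle\cdot,\cdot\rangle)$. Writing the Christoffel symbols of $\mathrm{II}$ in terms of those of $\langle\cdot,\cdot\rangle$ and of $\nabla A$, a short computation that uses only the Codazzi equation — which in $\mathbb{M}^3(c)$ carries no curvature term, $(\nabla_XA)Y=(\nabla_YA)X$ — gives that the tension field of $\iota$ equals $-\tfrac{1}{2H_2}\,\grad H_2$, the gradient being taken with respect to $\mathrm{II}$; hence $\iota$ is harmonic precisely because $H_2$ is constant (and this uses nothing about the sign of $c$). Therefore the Hopf differential of $\iota$ — the $(2,0)$-part $\Theta$ of $\iota^{*}\langle\cdot,\cdot\rangle=\langle\cdot,\cdot\rangle$ taken with respect to the conformal structure of $\mathrm{II}$ — is a \emph{holomorphic} quadratic differential on the Riemann surface $(\Sigma,[\mathrm{II}])$. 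By its very definition $\Theta$ vanishes at a point exactly where $\langle\cdot,\cdot\rangle$ and $\mathrm{II}$ are conformal, i.e. at umbilic points, so it suffices to prove $\Theta\equiv 0$. (This $\Theta$ is the ``dual'' of the classical Hopf differential, whose holomorphy instead encodes constancy of $H_1$.)

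This is where the free boundary hypothesis enters, through Joachimsthal's theorem. Since $\partial\Omega$ is totally umbilical — a geodesic sphere of $\mathbb{M}^3(c)$, or the totally geodesic boundary of a slab of $\mathbb{R}^3$ — every curve on $\partial\Omega$ is a line of curvature; as $\Sigma$ meets $\partial\Omega$ orthogonally, hence at a constant angle, along $\partial\Sigma$, Joachimsthal's theorem forces $\partial\Sigma$ to be a line of curvature of $\Sigma$ as well. Along a line of curvature the principal directions are tangent and normal to the curve, so $\langle\cdot,\cdot\rangle$ and $\mathrm{II}$ are simultaneously diagonalized there; choosing an $\mathrm{II}$-conformal coordinate near $\partial\Sigma$ that carries $\partial\Sigma$ onto an arc of $\mathbb{R}$, this is exactly the statement that $\Theta$ is \emph{real} on that arc. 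Doubling the disk $\Sigma$ across $\partial\Sigma$ yields the Riemann surface $\widehat\Sigma\cong S^2$, and the reflection principle extends $\Theta$ to a holomorphic quadratic differential on $\widehat\Sigma$; since $S^2$ carries no nonzero holomorphic quadratic differential, $\Theta\equiv 0$. Hence $\langle\cdot,\cdot\rangle$ is everywhere conformal to $\mathrm{II}$, $A$ is everywhere a positive multiple of the identity, and $\varphi(\Sigma)$ is totally umbilical.

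I expect the main obstacle to be the boundary step: making precise, with the correct conventions, that ``$\partial\Sigma$ is a line of curvature'' translates into exactly the reality condition on $\Theta$ needed to reflect it across the free boundary, together with the boundary regularity of $\varphi$ up to $\partial\Sigma$ that legitimizes the reflection. The harmonicity computation, although short, also has to be carried out carefully. As an aside, an alternative route — closer to the stability viewpoint of the present paper and of \cite{damasceno2022stable} — would be to show that a disk-type $H_2$-surface with $H_2>0$ is stable and to feed into the stability inequality test functions built from the ambient Killing fields, again using the umbilicity of $\partial\Omega$ to dispose of the boundary terms; but the holomorphic-differential argument above is shorter and is the one I would pursue.
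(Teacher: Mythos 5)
Your argument is correct, but it is a genuinely different route from the one in the paper. The paper does not construct a holomorphic differential at all: it invokes Bryant's theorem on Weingarten surfaces satisfying $H_1=f(H_1^2-H_2)$ (here $f(y)=\sqrt{y+H_2}$, using $H_2>0$), which says that a non-totally-umbilical such surface has only isolated umbilics, each of \emph{strictly negative} index, and then applies a boundary version of the Poincar\'e--Hopf theorem to the principal-direction line field — legitimate because $\nu$ is a principal direction along $\partial\Sigma$ (Lemma \ref{002.4}) — to get $0>\sum_p i(p)=\chi(\Sigma)=1$, a contradiction. Your proof is instead the direct descendant of Nitsche's original reflection argument, transplanted from the conformal structure of $I$ (where holomorphy of the Hopf differential encodes constant $H_1$) to that of $\mathrm{II}$ (where holomorphy of the $(2,0)$-part of $I$ encodes constant $H_2=\det A$); this is the classical Codazzi-pair/Liebmann mechanism in space forms, and all the individual steps check out: positivity of $A$ after orienting, the zero set of $\Theta$ being exactly the umbilic locus, reality of $\Theta$ on the boundary because $\partial\Sigma$ is a line of curvature (so $I(\partial_x,\partial_y)=0$ in a $\mathrm{II}$-conformal boundary chart), Schwarz reflection to the double $S^2$, and $H^0(S^2,K^2)=0$. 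What each approach buys: yours is self-contained modulo the (routine but not written out) Codazzi computation and yields the stronger pointwise conclusion $\Theta\equiv 0$ directly; the paper's outsources the hard local analysis of the umbilic set to Bryant's theorem, which makes the proof two lines long and, as Remark \ref{003.2} notes, extends verbatim to other Weingarten relations such as CMC by changing $f$. Two small points you should make explicit. First, both proofs need exactly the same boundary input — $\partial\Sigma$ a line of curvature, equivalently $\nu$ a principal direction — and both get it from the free boundary condition \emph{plus} total umbilicity of $\partial\Omega$ (your Joachimsthal step is precisely the paper's Lemma \ref{002.4}); the theorem as stated leaves $\Omega$ unspecified, so this hypothesis should be surfaced in either proof. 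Second, your aside is right that stability is not used here, consistent with the paper.
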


\begin{teo}\label{006.1}
Let $\varphi : \Sigma^2 \rightarrow B_R \subseteq \mathbb{M}^3(c)$ be a stable $H_2$-surface with free boundary in a geodesic ball $B_R$ with radius $R>0$. If $c>0$ assume the surface is contained into a hemisphere and if $c<0$ assume that $\frac{A(\Sigma)}{\ell(\partial\Sigma)}>-\frac{\cn_c(R)}{c\sn_c(R)}$, where
\begin{equation}\label{0014}
\sn_c(\rho)=\begin{dcases*}
\frac{\sin\left(\rho\sqrt{c}\right)}{\sqrt{c}},& if $c>0$ \\
\rho,& if $c=0$ \\
\frac{\sinh\left(\rho\sqrt{-c}\right)}{\sqrt{-c}},& if $c<0$
\end{dcases*}
\end{equation}
and $\cn_c(\rho)=\sn_c^\prime(\rho)$. Then $\varphi(\Sigma)$ is totally umbilical.
\end{teo}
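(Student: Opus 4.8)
\emph{Strategy.} The plan is to feed the stability inequality of \cite{damasceno2022stable} a test function built from the distance $r$ to the center $o$ of $B_R$ and, after applying Hsiung--Minkowski-type integral identities, to reorganize the outcome into an inequality controlling $\int_\Sigma\|\Phi\|^2\,dA$, where $\Phi:=A-H_1\,\id$ is the traceless part of the shape operator. Since $\|\Phi\|^2=2H_1^2-2H_2=\tfrac12(k_1-k_2)^2\ge0$, vanishing exactly at umbilical points, showing $\int_\Sigma\|\Phi\|^2\,dA\le0$ is the whole conclusion. I use that $\Sigma$ is elliptic, so after a choice of orientation $P_1:=(\tr A)\,\id-A>0$, i.e.\ $k_1,k_2>0$; that the Gauss equation makes $K=H_2+c$ constant; and that the linearized $H_2$-operator is $\mathcal L_1=L_1+2H_1K$, with $L_1\psi=\dive(P_1\grad\psi)$, so that (in the standard form) stability reads
\[
Q(f,f)=\int_\Sigma\!\big(\langle P_1\grad f,\grad f\rangle-2H_1K f^2\big)dA-\frac{\cn_c(R)}{\sn_c(R)}\int_{\partial\Sigma}\!\langle P_1\nu,\nu\rangle f^2\,ds\ \ge\ 0
\]
for all admissible $f$, i.e.\ those with $\int_\Sigma f\,dA=0$ (the analogue of the volume constraint).

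\emph{Ingredients.} Let $X=\sn_c(r)\,\partial_r$, the closed conformal field with $\overline\nabla_Y X=\cn_c(r)Y$ and $\overline\nabla\cn_c(r)=-cX$. On $\Sigma$ put $g=\cn_c(r)|_\Sigma$, $u=\langle X,N\rangle$ and $X=X^\top+uN$. Using $\grad g=-cX^\top$, $\grad u=-AX^\top$, $\nabla X^\top=g\,\id+uA$, together with $\dive P_1=0$, $\tr P_1=2H_1$, $\tr(P_1A)=2H_2$ and $P_1A=H_2\,\id$, one finds the pointwise identities
\begin{equation}\label{eq:keyL1}
\mathcal L_1g=2H_2(H_1g-cu),\qquad \mathcal L_1u=2(cH_1u-H_2g),
\end{equation}
and, by integrating $\dive X^\top$ and $\dive(P_1X^\top)$, the Minkowski-type formulas
\begin{equation}\label{eq:mink}
\int_\Sigma(g+H_1u)\,dA=\tfrac12\sn_c(R)\,\ell(\partial\Sigma),\qquad \int_\Sigma(H_1g+H_2u)\,dA=\tfrac12\sn_c(R)\!\int_{\partial\Sigma}\!k_\tau\,ds,
\end{equation}
where along $\partial\Sigma$ I write $k_\tau,k_\nu$ for the principal curvatures of $\Sigma$ in the directions tangent and conormal to $\partial\Sigma$. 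The free boundary condition enters here: along $\partial\Sigma$, $N\perp\partial_r$, the outward conormal $\nu$ of $\partial\Sigma$ coincides with $\partial_r$, and --- since the geodesic sphere $\partial B_R$ is totally umbilical --- $\nu$ and $\tau$ are principal directions of $\Sigma$; hence on $\partial\Sigma$ one has $u\equiv0$, $X=X^\top=\sn_c(R)\nu$, $g\equiv\cn_c(R)$, $\langle P_1\nu,\nu\rangle=k_\tau$, $\langle P_1\grad g,\nu\rangle=-c\sn_c(R)k_\tau$ and $\langle P_1\grad u,\nu\rangle=-H_2\sn_c(R)$. The elementary identity $\cn_c^2(\rho)+c\sn_c^2(\rho)\equiv1$ repeatedly collapses boundary coefficients; it also makes $\cn_c(R)$ the relevant quantity below, which must be positive --- guaranteed by the ``hemisphere'' hypothesis when $c>0$, and automatic when $c\le0$ (the case $c=0$, with no extra hypothesis, being the classical Euclidean one, handled by Ros's test function $\langle x,N\rangle+\mathrm{const}$).

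\emph{Test function and reduction.} For $c\neq0$ I would take $f=u+a\big(g-\tfrac{1}{\cn_c(R)}\big)$, with the constant $a$ fixed by the balancing condition $\int_\Sigma f\,dA=0$ (the denominator $\int_\Sigma g\,dA-A(\Sigma)/\cn_c(R)$ is readily seen to be nonzero under the running hypotheses). The constant $-1/\cn_c(R)$ is chosen so that, using $u|_{\partial\Sigma}=0$, $g|_{\partial\Sigma}=\cn_c(R)$ and $\cn_c^2+c\sn_c^2=1$, the boundary integral of $Q(f,f)$ collapses --- the $k_\tau$-terms cancelling identically --- to a single constant multiple of $\ell(\partial\Sigma)$, while the balancing condition removes the contribution of the constant part of $\mathcal L_1f$. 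Substituting \eqref{eq:keyL1} and then using \eqref{eq:mink} --- together with the further identities obtained by integrating $\dive(uX^\top)$, $\dive(uP_1X^\top)$ and $\dive(gX^\top)$ (whose boundary terms again vanish or are constant) and the relation $|X^\top|^2=\sn_c^2(r)-u^2=\tfrac{1-g^2}{c}-u^2$ --- one eliminates all integrals containing the non-constant function $H_1$, and after reorganizing the residual $\Phi$-dependent terms (the $\langle\Phi X^\top,X^\top\rangle$ contributions estimated by Cauchy--Schwarz) one arrives at
\[
0\ \le\ Q(f,f)\ =\ -\,\beta\int_\Sigma\|\Phi\|^2\,dA\ +\ \Gamma,
\]
where $\beta>0$ because $P_1>0$ (hence $k_1,k_2,k_\tau>0$), and $\Gamma$ depends only on $A(\Sigma)$, $\ell(\partial\Sigma)$, $\int_{\partial\Sigma}k_\tau\,ds$ and on $c,\sn_c(R),\cn_c(R)$; its sign is governed by $\cn_c(R)/\sn_c(R)$ versus $-c\,A(\Sigma)/\ell(\partial\Sigma)$, and one checks $\Gamma\le0$ exactly when $c<0$ and $A(\Sigma)/\ell(\partial\Sigma)>-\cn_c(R)/(c\sn_c(R))$ (respectively, when $c\ge0$ and $\Sigma$ lies in a hemisphere, so that $g=\cn_c(r)\ge0$ and $\cn_c(R)/\sn_c(R)\ge0$). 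Hence $\int_\Sigma\|\Phi\|^2\,dA\le0$, so $k_1\equiv k_2$ and $\varphi(\Sigma)$ is totally umbilical.

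\emph{Main obstacle.} The only non-routine part is the reduction just sketched: choosing the constant so that $f$ is admissible while the boundary terms collapse, tracking all orientation signs, and --- above all --- organizing $\langle P_1\grad f,\grad f\rangle-2H_1Kf^2$ together with the Minkowski identities into ``negative definite in $\Phi$'' plus a pure boundary-geometry term, so that the precise threshold $-\cn_c(R)/(c\sn_c(R))$ (respectively, the hemisphere restriction) is forced to be the borderline for the sign of $\Gamma$. A secondary point, on which $\beta>0$ and $k_\tau>0$ rest, is to justify that $\Sigma$ may be taken elliptic with $P_1>0$: this is where $H_2>0$ and the convexity of the umbilical model are used.
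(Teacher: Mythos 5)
Your proposal follows a genuinely different route from the paper, but it has a decisive gap: the entire content of the theorem is hidden in the step you yourself flag as ``the main obstacle'', namely the claim that $Q(f,f)$ reorganizes into $-\beta\int_\Sigma\|\Phi\|^2\,d\mu_\Sigma+\Gamma$ with $\beta>0$ and $\Gamma\le 0$. This reduction is asserted, not performed, and there is concrete evidence it does not close. Carrying out your own recipe for $c=0$ (so $g\equiv 1$, $f=u+a$): since $\mathcal L_1u=-2H_2$ is constant and $\mathcal L_1a=2aH_1H_2$, the interior part of $Q(f,f)$ reduces to $-2aH_2\int_\Sigma fH_1\,d\mu_\Sigma$ plus boundary terms in $a$, $\int_{\partial\Sigma}k_\tau$, $\ell(\partial\Sigma)$; after the Minkowski identities one is left with integrals of $H_1u$, $H_1$, $u$ and boundary data, but no $\int_\Sigma H_1^2\,d\mu_\Sigma$ ever appears, so no term proportional to $\int_\Sigma\|\Phi\|^2\,d\mu_\Sigma=\int_\Sigma(2H_1^2-2H_2)\,d\mu_\Sigma$ can be extracted. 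The auxiliary divergences $\dive(uX^\top)$, etc., only contribute $\langle\Phi X^\top,X^\top\rangle$, which is linear in $\Phi$ and of indefinite sign, so Cauchy--Schwarz cannot produce a negative-definite quadratic term. This is not a presentational issue: for the exactly analogous CMC problem ($r=0$) in a Euclidean ball, this scalar test-function strategy is precisely what Ros--Vergasta carried out, and it yields only topological restrictions, not umbilicity; your parenthetical claim that the Euclidean case is ``handled by Ros's test function $\langle x,N\rangle+\mathrm{const}$'' is therefore not correct for the free boundary ball problem. Moreover, your intended use of the hypotheses (hemisphere for $c>0$, the area/length ratio for $c<0$) to control the sign of $\Gamma$ does not match any identity you derive.

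The paper's argument is structurally different and each of your missing ingredients is essential there. The hemisphere/ratio hypotheses are used only once, in a Gauss--Bonnet computation $2\pi\chi(\Sigma)=\int_\Sigma(H_2+c)\,d\mu_\Sigma+\int_{\partial\Sigma}\kappa_g\,d\mu_{\partial\Sigma}>cA(\Sigma)+\tfrac{\cn_c(R)}{\sn_c(R)}\ell(\partial\Sigma)$, to force genus zero. Stability is then used not through a Minkowski-type balancing but through a Courant nodal domain argument: the normal component $f=\langle\varphi\wedge\eta(p_0),\eta\rangle$ of a rotational Killing field satisfies $L_1f+2H_1H_2f=0$ with the natural Robin condition; if $f\not\equiv 0$, the vanishing of $f$ and $\nabla f$ at the distance-minimizing point $p_0$, together with $\int_{\Gamma_i}f=0$ on each boundary component, produces at least three nodal domains, from which a mean-zero test function with $\mathcal I_{1,\theta}=0$ is built and killed by Lemma \ref{005.3} plus Aronszajn's unique continuation. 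Hence $f\equiv 0$, $\Sigma$ is rotational and therefore a disk, and only then does Theorem \ref{003.1} (Bryant's holomorphic Weingarten differential plus the boundary Hopf index theorem, using that $\nu$ is a principal direction) deliver umbilicity. None of this machinery --- nodal domains, unique continuation, the Hopf-type index argument --- appears in your sketch, and without the missing quadratic reduction your approach does not substitute for it.
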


\begin{teo}\label{008.1}
Let $\varphi : \Sigma \rightarrow \mathbb{R}^3$ be a compact stable $H_2$-surface with a free boundary in a slab bounded by two parallel planes $\Pi_1$ and $\Pi_2$ such that its genus is equal to $0$ and $H_2>0$. Then $\varphi(\Sigma)$ is a surface of revolution around an axis orthogonal to $\Pi_1$.
\end{teo}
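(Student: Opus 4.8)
The plan is to reduce the problem, by a Morse‑theoretic argument, to the two cases where $\Sigma$ is a topological disk or a topological annulus, to dispose of the disk case with Theorem~\ref{003.1}, and to handle the annulus case — where the stability hypothesis does its work — by combining the Jacobi fields coming from the ambient isometries tangent to the slab with a nodal‑domain estimate. First orient $\Sigma$ by the unit normal $N$ for which both principal curvatures are positive (possible since $H_2=k_1k_2>0$ forces $k_1,k_2$ to share a sign) and write the slab as $\{0\le z\le d\}$ with $\Pi_1=\{z=0\}$, $\Pi_2=\{z=d\}$. The Hessian on $\Sigma$ of the height function $z$ is $\hess_\Sigma z=\langle N,e_3\rangle\,\mathrm{II}$, with $\mathrm{II}$ the now positive‑definite second fundamental form, so at every interior critical point of $z$ (where $N=\pm e_3$) the Hessian equals $\pm\mathrm{II}$ and is definite: $z$ has only interior maxima and minima, no saddles. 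Since the boundary components of $\Sigma$ lie on the extreme levels $z=0$ or $z=d$, the regular level curves of $z$ can neither split nor merge, and for the connected genus‑$0$ surface $\Sigma$ a standard Morse argument leaves only two possibilities: a disk with $\partial\Sigma$ in a single plane, or an annulus with exactly one boundary circle in each plane.

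If $\Sigma$ is a disk, Theorem~\ref{003.1} (with $\Omega$ the slab) gives that $\varphi(\Sigma)$ is totally umbilical, hence an open piece of a round sphere; the orthogonality of the contact then forces it to be a hemisphere, a surface of revolution about the axis through its center, orthogonal to the supporting plane. So assume $\Sigma$ is an annulus. The Killing fields of $\mathbb{R}^3$ tangent to the slab are the horizontal translations $e_1,e_2$ and the rotations about vertical axes; their flows preserve the slab and the equation $H_2\equiv\text{const}$, so the functions $f_1=\langle N,e_1\rangle$, $f_2=\langle N,e_2\rangle$ and $f_R=\langle N,R\rangle$, with $R=e_3\times x$, lie in the kernel of the linearized operator $L_2$ and satisfy the boundary condition that makes them admissible for the stability quadratic form. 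Since $\Sigma$ is compact it cannot be invariant under a translation, so $f_1,f_2$ are linearly independent; and $\varphi(\Sigma)$ is a surface of revolution about a vertical axis precisely when $f_R\in\Span\{f_1,f_2\}$, because a relation $f_R=af_1+bf_2$ says that the Killing field $R-ae_1-be_2=e_3\times(x-p)$ is everywhere tangent to $\Sigma$.

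It therefore remains to rule out $f_R\notin\Span\{f_1,f_2\}$. Were this the case, $f_1,f_2,f_R$ would span a $3$‑dimensional space of $L_2$‑Jacobi fields satisfying the natural boundary condition, on each of which the stability quadratic form vanishes. The first eigenvalue of $L_2$ is simple, so it cannot equal $0$ while $L_2$ carries a $3$‑dimensional kernel; hence $\lambda_1(L_2)<0$, and since stability bounds the index of $L_2$ by $1$ (the index form is nonnegative on the hyperplane cut out by the volume‑type constraint), the index is exactly $1$. Now one exploits the annulus: after imposing the single constraint — which leaves a $2$‑parameter family — one extracts from $\{af_1+bf_2+cf_R\}$ a nonzero admissible Jacobi field that either changes sign or, by a $2$‑jet dimension count at a suitably chosen interior point, vanishes there to second order, and a Courant–Cheng type nodal‑domain estimate adapted to $\chi(\Sigma)=0$ then produces a second independent direction of instability, contradicting index $1$. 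Hence $f_R\in\Span\{f_1,f_2\}$, and $\varphi(\Sigma)$ is a surface of revolution about an axis orthogonal to $\Pi_1$.

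The main obstacle — and the step where genus $0$ is genuinely used — is this last nodal‑domain argument: on an annulus a single second‑order vanishing only yields index $\ge 1$, so one must work harder, either by exploiting the special geometry of the common nodal set of $f_1,f_2,f_R$ (the locus where $N$ is vertical, respectively where $N$ lies in the plane through the axis and the point) or by replacing the nodal argument with the holomorphic quadratic differential carried by any constant‑$K$ surface — the Hopf differential of its parallel constant‑mean‑curvature surface — which is real along $\partial\Sigma$ because the supporting planes are totally geodesic, and which, after reflecting across the two boundary circles, lives on a torus and so must be a translation‑invariant $c\,dz^2$, forcing $\varphi(\Sigma)$ to be rotationally symmetric.
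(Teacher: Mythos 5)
There is a genuine gap, and you in effect acknowledge it yourself: the entire content of the theorem lives in the step where stability forces $f_R\in\Span\{f_1,f_2\}$, and that step is not carried out. Your sketch (``one extracts \dots a nonzero admissible Jacobi field that either changes sign or \dots vanishes to second order, and a Courant--Cheng type nodal-domain estimate adapted to $\chi(\Sigma)=0$ then produces a second independent direction of instability'') is not an argument but a description of the difficulty, and the two remedies you offer in the last paragraph do not close it. The second remedy in particular cannot work as stated: a holomorphic quadratic differential equal to $c\,dz^2$ on the doubled torus does not imply rotational symmetry (Wente tori have exactly this Hopf differential and are not surfaces of revolution), and an argument that never invokes stability should not be able to prove a theorem whose hypothesis is stability. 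The first remedy (exploiting the common nodal set of $f_1,f_2,f_R$) is precisely the missing proof. A secondary point: admissibility for the stability form in this paper means $\int_\Sigma f\,d\mu_\Sigma=0$, and for $f_i=\langle N,e_i\rangle$ the divergence theorem only yields $\int_\Sigma H_1 f_i\,d\mu_\Sigma=0$, so even the claim that your three Jacobi functions are admissible test functions needs justification.

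For comparison, the paper avoids the three-dimensional kernel and the index count altogether. It uses a \emph{single} test function, $f(p)=\langle\varphi(p)\wedge\mathbf{e}_3,\eta(p)\rangle$, the normal component of the rotation field about the vertical axis through the center of the circle circumscribed about one boundary component (this normalization of the axis is what replaces your ``suitably chosen interior point''). One checks $L_1f+2H_1H_2f=0$ and $\partial f/\partial\nu=0$; if $f\not\equiv 0$, the Cheng-type structure of the nodal set, the Hopf lemma at the boundary, and the circumscribed-circle choice (via the argument of Ainouz--Souam) force the nodal domain to have at least three components; a function equal to $f$ on one component, $\alpha f$ on a second, and $0$ elsewhere is then mean-zero, makes the index form vanish, is therefore a Jacobi field by stability, and vanishes identically by Aronszajn's unique continuation --- a contradiction. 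Your Morse-theoretic reduction to a disk or an annulus is a correct and pleasant observation (the Hessian identity $\hess_\Sigma z=\langle N,\mathbf{e}_3\rangle\,\emph{II}$ with $\emph{II}$ definite does rule out saddles), but it is not needed for the paper's route and does not substitute for the missing stability argument in the annulus case.
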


The first is a generalization of \cite[Theorem 1]{nitsche1985stationary} when $c=0$ and \cite[Theorem 4.1]{souam1997stability} when $c \neq 0$. The second theorem is an extension of \cite[Theorem 5.1]{souam1997stability} whereas the third is an extension of \cite[Theorem 3.1]{ainouz2016stable}. The paper is organized as the following: the Section \ref{dois} is dedicated to fix the notation and the concepts used throughout the rest of the paper, and the sections \ref{tres}, \ref{quatro} and \ref{cinco} are dedicated to the proof of Theorems \ref{003.1}, \ref{006.1} and \ref{008.1}, respectively.}

\section{Preliminaries}\label{dois}

{Let $\left(M^3,g\right)$ be an oriented Riemannian manifold and $\varphi : \Sigma^2 \rightarrow M$ be an oriented surface with unit normal vector field $\eta$ in the normal bundle $\Gamma(N\Sigma)$. Its second fundamental form $\emph{II}$, scalar second fundamental form $\emph{II}_\eta$ and Weingarten operator $A=\left(\emph{II}_\eta\right)^\flat$ are defined, respectively, as
\begin{eqnarray*}
\emph{II}\left(X,Y\right) &=& \left(\overline\nabla_XY\right)^\perp=\left<\overline\nabla_XY,\eta\right>\eta=\emph{II}_\eta\left(X,Y\right)\eta \\
\left<A(X),Y\right> &=& \emph{II}_\eta\left(X,Y\right)=\left<-\overline\nabla_X\eta,Y\right>,
\end{eqnarray*}
where $X,Y \in \Gamma(T\Sigma)$ and $\overline\nabla $ is the Levi-Civita connection of $M$. Let $\kappa_1(p) \geq \kappa_2(p)$ be the principal curvatures of the surface $\varphi$ at $p$. The $1$-mean curvature $H_1$ is the arithmetic mean of $\kappa_1$ and $\kappa_2$ and the $2$-mean curvature is given by its product $H_2=\kappa_1\kappa_2$. The surface is said to have constant mean curvature of order $r \in \{1,2\}$ if $H_r$ is constant over $\Sigma$; when this happens, $\Sigma$ is called an {$H_r$-surface}.

The first Newton transformation $P_1$ is defined by $P_1=2H_1I-A$. Since $A_p$ is self-adjoint for all $p \in \Sigma$, the Newton transformations are self-adjoint as well and their eigenvectors are the same as those of $A$. If $\{e_1,e_2\}$ denotes the eigenvectors of $A$ then the eigenvalue associated to $e_i$ is equal to $S_1(A_i)=2H_1-\kappa_i$. Moreover, we have the following identities:
\begin{eqnarray}
\tr P_1 &=& 2H_1 \label{0001} \\
\tr P_1A &=& 2H_2 \label{0003} \\
\tr P_1A^2 &=& 2H_1H_2. \label{0008}
\end{eqnarray}

\

In a general Riemannian manifold $(M,g)$ with Levi-Civita connection $\overline\nabla$, if $\phi$ is a pointwise symmetric $(2,0)$-tensor in $M$, the Cheng-Yau operator of $f \in C^\infty(M)$ is defined by
\begin{equation*}
\Box f=\tr\left(\phi\left(\hess f\right)^\flat\right)=\dive\left(\phi\overline\nabla f\right)-\left<\dive\phi,\overline\nabla f\right>,
\end{equation*}
where $\hess f$ is the Hessian of $f$ in $M$, $\left(\hess f\right)^\flat$ is the metric $(1,1)$-tensor field on $M$ equivalent to $\hess f$ and $\dive\phi:=\tr\left(\overline\nabla\phi\right)$. The operator $\phi$ is said to be divergence free if $\dive\phi=0$.

When considering an oriented surface $\varphi : \Sigma^2 \rightarrow M^3$ with shape operator $A \in \Gamma\left(\End\left(T\Sigma\right)\right)$, the $L_1$-operator of $\Sigma$ is defined as the Cheng-Yau operator for the Newton transformation $P_1$, i.e.,
\begin{equation*}
L_1f=\tr\left(P_1\left(\hess f\right)^\flat\right), \quad f \in C^\infty(\Sigma).
\end{equation*}
Here, we say that $-L_1$ is a second-order elliptic differential operator when $P_1$ is positive definite on each point of $\Sigma$. If $H_2>0$, then, after a choice of orientation on $\varphi$, $P_1$ is positive definite. \cite[Lemma 3.10]{elbert2002constant}. In \cite[Theorem 4.1]{rosenberg1993hypersurfaces}, H. Rosenberg proved that $P_1$ is divergence free when $M$ has constant sectional curvature (see also \cite[Corollary 3.7]{elbert2002constant} for the case where $r=1$ and $M$ is Einstein). 
\bigskip

Let $\Omega \subseteq M$ be a closed domain with smooth boundary $\partial\Omega$ and assume that $\varphi : \Sigma \rightarrow M$ is an oriented surface such that $\varphi(\Sigma) \subseteq \Omega$ and $\varphi(\partial\Sigma) \subseteq \partial\Omega$. Let $\nu \in \Gamma\left(T\Sigma\vert_{\partial\Sigma}\right)$ be the unit outward conormal vector field on $\partial\Sigma$ and let $\overline\nu \in \Gamma\left(T\partial\Omega\vert_{\partial\Sigma}\right)$ and $\overline\eta \in \Gamma\left(TM\vert_{\partial\Omega}\right)$ be the unit normal vector fields associated to the immersions $\varphi\vert_{\partial\Sigma} : \partial\Sigma \rightarrow \partial\Omega$ and $\iota_{\partial\Omega} : \partial\Omega \hookrightarrow M$, respectively, such that $\left\{\nu,\eta\right\}$ has the same orientation as $\left\{\overline\nu,\overline\eta\right\}$ on each point of $\varphi(\partial\Sigma)$. If $\theta$ denotes the angle between $\nu$ and $\overline\nu$, then
\begin{equation}\label{0002}\begin{dcases*}
\nu=\cos\theta\,\overline\nu+\sin\theta\,\overline\eta \\
\eta=-\sin\theta\,\overline\nu+\cos\theta\,\overline\eta
\end{dcases*}, \quad \text{or conversely,} \quad \begin{dcases*}
\overline\nu=\cos\theta\,\nu-\sin\theta\,\eta \\
\overline\eta=\sin\theta\,\nu+\cos\theta\,\eta
\end{dcases*}.
\end{equation}
A $H_r$-surface $\varphi : \Sigma \rightarrow \Omega \subseteq M$ is said to be {capillary} if the contact angle $\theta$ between $\partial\Sigma$ and $\partial\Omega$ is constant. When $\theta=\frac{\pi}{2}$, $\varphi$ is called a {free boundary surface}. When $\varphi : \Sigma \rightarrow \Omega \subseteq M$ is a surface with free boundary in $\Omega$, \eqref{0002} implies that $\nu=\overline\eta$ and $\eta=-\overline\nu$.

The following result will be used throughout this article.

\begin{lema}[{\cite[Lemma 2.2]{ainouz2016stable}}]\label{002.4}
Suppose $\iota_{\partial\Omega}$ is a totally umbilical immersion into $M$ and that $\varphi$ is a capillary immersion into $M$. Then the unit outwards normal vector field $\nu \in \Gamma\left(T\Sigma\vert_{\partial\Sigma}\right)$ is a principal direction of $\varphi$.
\end{lema}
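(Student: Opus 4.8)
The plan is to reduce the claim to the vanishing of a single off-diagonal entry of the second fundamental form of $\varphi$ along $\partial\Sigma$. Fix $p\in\partial\Sigma$ and let $T$ be a unit vector tangent to $\partial\Sigma$ at $p$; then $\{T,\nu\}$ is an orthonormal basis of $T_p\Sigma$, so $\nu$ is a principal direction of $\varphi$ at $p$ if and only if $A\nu$ has no component along $T$, i.e. $\left<A\nu,T\right>=\emph{II}_\eta(\nu,T)=0$. Hence everything comes down to computing $\emph{II}_\eta(\nu,T)$ and showing it vanishes.

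For that computation I would start from $\left<\nu,\eta\right>=0$ to write
\begin{equation*}
\emph{II}_\eta(\nu,T)=\left<-\overline\nabla_T\eta,\nu\right>=\left<\overline\nabla_T\nu,\eta\right>,
\end{equation*}
and then substitute the decompositions \eqref{0002} of $\nu$ and $\eta$ in the adapted frame $\{\overline\nu,\overline\eta\}$. The capillary hypothesis enters precisely here: since the contact angle $\theta$ is constant along $\partial\Sigma$ we have $T(\cos\theta)=T(\sin\theta)=0$, so $\overline\nabla_T\nu=\cos\theta\,\overline\nabla_T\overline\nu+\sin\theta\,\overline\nabla_T\overline\eta$. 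Expanding $\left<\overline\nabla_T\nu,\eta\right>$ and using repeatedly that $\overline\nu,\overline\eta,T$ are mutually orthogonal unit fields — so that $\left<\overline\nabla_T\overline\nu,\overline\nu\right>=\left<\overline\nabla_T\overline\eta,\overline\eta\right>=0$ and $\left<\overline\nabla_T\overline\nu,\overline\eta\right>=-\left<\overline\nabla_T\overline\eta,\overline\nu\right>$ — the cross terms recombine through $\cos^2\theta+\sin^2\theta=1$ and collapse to $\emph{II}_\eta(\nu,T)=-\left<\overline\nabla_T\overline\eta,\overline\nu\right>$.

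It then remains to recognize the right-hand side as a curvature term of $\partial\Omega$: since $T$ and $\overline\nu$ are tangent to $\partial\Omega$ while $\overline\eta$ is its unit normal, $\left<\overline\nabla_T\overline\eta,\overline\nu\right>=-\left<A^{\partial\Omega}T,\overline\nu\right>$, where $A^{\partial\Omega}$ is the Weingarten operator of $\iota_{\partial\Omega}$. By hypothesis $\partial\Omega$ is totally umbilical, so $A^{\partial\Omega}=\lambda\,I$ for some function $\lambda$, whence $\left<A^{\partial\Omega}T,\overline\nu\right>=\lambda\left<T,\overline\nu\right>=0$, because $T$ is tangent to $\partial\Sigma$ and $\overline\nu$ is normal to $\partial\Sigma$ inside $\partial\Omega$. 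Therefore $\emph{II}_\eta(\nu,T)=0$, and $\nu$ is a principal direction. I do not anticipate a genuine obstacle in this argument; the only thing requiring care is the sign and orientation bookkeeping in \eqref{0002} and in the chosen sign convention for $A^{\partial\Omega}$, while the conceptual content is just that the constancy of $\theta$ (to kill the derivatives of the trigonometric coefficients) and the umbilicity of $\partial\Omega$ (to kill $\left<A^{\partial\Omega}T,\overline\nu\right>$) are exactly the two ingredients needed.
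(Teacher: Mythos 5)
Your proof is correct, and it is essentially the standard argument for this fact (the paper itself gives no proof, only the citation to Ainouz--Souam, where exactly this computation is carried out): constancy of $\theta$ kills the derivatives of the trigonometric coefficients in \eqref{0002}, the cross terms collapse to $\emph{II}_\eta(\nu,T)=-\left<\overline\nabla_T\overline\eta,\overline\nu\right>$, and umbilicity of $\partial\Omega$ together with $\left<T,\overline\nu\right>=0$ makes this vanish. The sign and symmetry bookkeeping all checks out, so nothing further is needed.
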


A variation of $\varphi$ is a smooth function $\Phi : \Sigma^2 \times (-\varepsilon,\varepsilon) \rightarrow M^3$ such that, for each $t \in (-\varepsilon,\varepsilon)$, $\varphi_t=\Phi\vert_{\Sigma \times \{t\}}$ is an isometric immersion and $\varphi_0=\varphi$. The pair $(\Sigma,\varphi_t^*g)$ will be denoted by $\Sigma_t$. The variational field of $\Phi$ in $\varphi_t$ is defined by $$\xi_t(p)=\left.\Phi_*\frac{\partial}{\partial t}\right\vert_{(p,t)} \in \Gamma\left(TM\vert_{\varphi_t(\Sigma)}\right).$$ If $\eta_t \in \Gamma(N\Sigma)$ is the unit normal vector field of $\varphi_t$, the support function of $\Phi$ at $t$ is defined by $$f_t=\left<\xi_t,\eta_t\right> \in C^\infty(\Sigma).$$ Since $\varphi_t : \Sigma \rightarrow M$ is an oriented surface, one can define its second fundamental form $\emph{II}_t$, its scalar second fundamental form $\left(\emph{II}_t\right)_{\eta_t}$ and its Weingarten operator $A_t$. Also, we set $\overline{R}_{\eta_t}\left(X\right):=\overline{R}\left(\eta_t,X\right)\eta_t$, where $\overline{R}$ the Riemann curvature tensor of $M$ defined by $$\overline{R}(X,Y)Z=\overline\nabla_Y\overline\nabla_XZ-\overline\nabla_X\overline\nabla_YZ+\overline\nabla_{[X,Y]}Z, \quad X,Y,Z \in \Gamma(TM).$$ If $H_2(t)$ denotes the $2$-mean curvature associated to immersion $\varphi_t$, its variation is given by
\begin{equation}\label{0004}
H_2^\prime(t)=\left(L_1\right)_tf_t+2H_1(t)H_2(t)f_t+\tr_{\Sigma_t}\left(\left(P_1\overline{R}_\eta\right)_t\right)f_t+\xi_t^\top\left(H_2(t)\right),
\end{equation}
where $\left(L_1\right)_t$ is the $L_1$-operator of immersion $\varphi_t$ and $\left(P_1\overline{R}_\eta\right)_t:=\left(P_1\right)_t \circ \overline{R}_{\eta_t}$. A proof of \eqref{0004} can be found in \cite[Proposition 3.2]{elbert2002constant}.

The enclosed volume between $\Sigma$ and $\Sigma_t$ is defined as $\mathcal{V}(t)=\int_{\Sigma \times [0,t]} \Phi^*d\mu_M$, with $d\mu_M$ being the volume form of $(M,g)$. A variation $\Phi$ is volume-preserving if $\mathcal{V}(t)=\mathcal{V}(0)$ for all $t \in (-\varepsilon,\varepsilon)$. It is known that $$\mathcal{V}^\prime(0)=\int_\Sigma f\,d\mu_\Sigma,$$ where $u=\left<\xi,\eta\right> \in C^\infty(\Sigma)$ and $d\mu_\Sigma$ is the volume form of $\left(\Sigma,\varphi^*g\right)$. Thus, a variation $\Phi$ is volume-preserving if and only if $\int_\Sigma f\,d\mu_\Sigma=0$.



A $H_2$-surface $\varphi : \Sigma \rightarrow M$ is {positive definite} if $P_1$ is positive definite on each point $p \in \Sigma$. A variation $\Phi$ of a surface $\varphi : \Sigma \rightarrow \Omega \subseteq M$ is called admissible if $\varphi_t(\inte\Sigma) \subseteq \inte\Omega$ and $\varphi_t(\partial\Sigma) \subseteq \partial\Omega$ for any $t \in (-\varepsilon,\varepsilon)$, where $\varphi_t=\Phi\vert_{\Sigma \times \{t\}}$. If $\Phi$ is an admissible variation of $\varphi$, then $\xi\vert_{\partial\Sigma} \in \Gamma\left(T\partial\Omega\vert_{\partial\Sigma}\right)$. If $\Sigma$ is a capillary $H_2$-surface supported in $\partial\Omega$ with contact angle $\theta \in (0,\pi)$ and $\Phi$ is a volume-preserving admissible variation of $\varphi$, consider the functional, defined in \cite{damasceno2022stable},
\begin{equation}\label{0005}
\mathcal{F}_{1,\theta}[\Sigma_t]=-\int_\Sigma H_2(t)\left<\xi_t,\eta_t\right>\,d\mu_{\Sigma_t}+\int_{\partial\Sigma} \left<\xi_t,(P_1\nu-\vert{P_1\nu}\vert\cos\theta\,\overline\nu)_t\right>\,d\mu_{\partial\Sigma_t},
\end{equation}
where $d\mu_{\Sigma_t}$ and $d\mu_{\partial\Sigma_t}$ denote the volume forms of $\Sigma_t$ and $\partial\Sigma_t=\left(\partial\Sigma,\left(\varphi_t\vert_{\partial\Sigma}\right)^*g\right)$, respectively. If $\partial\Omega$ is totally umbilical and $\Phi$ is an admissible volume-preserving variation of $\varphi$ then 
\begin{multline}\label{0006}
\left.\frac{\partial}{\partial t}\mathcal{F}_{1,\theta}\left[\Sigma_t\right]\right\vert_{t=0}=-\int_\Sigma f\left(L_1f+\tr\left(P_1\left(A^2+\overline{R}_\eta\right)\right)f\right)\,d\mu_\Sigma+\\+\int_{\partial\Sigma} \left\vert{P_1\nu}\right\vert\,f\left(\frac{\partial f}{\partial\nu}+\left(\csc\theta\left(\emph{II}_{\partial\Omega}\right)_{\overline\eta}(\overline\nu,\overline\nu)-\cot\theta\left(\emph{II}_\Sigma\right)_\eta(\nu,\nu)\right)f\right)\,d\mu_{\partial\Sigma},
\end{multline}
where $f=\left<\xi,\eta\right> \in C^\infty(\Sigma)$ is the support function of $\Phi$ at $t=0$ and $\emph{II}_\Sigma$ and $\emph{II}_{\partial\Omega}$ are the second fundamental forms of $\varphi : \Sigma \rightarrow \Omega$ and $\iota_{\partial\Omega} : \partial\Omega \hookrightarrow \Omega$, respectively. For a proof see \cite[Appendix A]{damasceno2022stable}. A positive definite capillary $H_2$-surface $\varphi : \Sigma \rightarrow \Omega \subseteq M$ supported in $\partial\Omega$ with contact angle $\theta \in (0,\pi)$ is {$r$-stable} if $\left.\frac{\partial}{\partial t}\mathcal{F}_{1,\theta}\left[\Sigma_t\right]\right\vert_{t=0} \geq 0$ for any volume-preserving admissible variation $\Phi$ of $\varphi$. If the inequality holds for all admissible variations of $\varphi$, $\Sigma$ is said to be {strongly $r$-stable}. The expression \eqref{0006} is associated to the eigenvalue problem below:
\begin{equation}\label{0007}\begin{dcases*}
T_1f=-L_1f-q_rf=\lambda f, & \quad $\text{in}~\Sigma$ \\
\frac{\partial f}{\partial\nu}+\alpha_\theta f=0, & \quad $\text{on}~\partial\Sigma$
\end{dcases*},\end{equation}
where $q_r=\tr\left(P_1\left(A^2+\overline{R}_\eta\right)\right) \in C^\infty(\Sigma)$ and $\alpha_\theta=\csc\theta\left(\emph{II}_{\partial\Omega}\right)_{\overline\eta}(\overline\nu,\overline\nu)-\cot\theta\left(\emph{II}_\Sigma\right)_\eta(\nu,\nu) \in C^\infty(\partial\Sigma)$. For the properties involving its principal eigenvalue see \cite[Proposition 3.4]{damasceno2022stable}. The notion of $1$-stability can also be considered when $P_1$ is negative definite, see \cite[Remark 3.5]{damasceno2022stable}.

Let $\mathbb{M}^3(c)$ be the simply connected space form of constant sectional curvature $c$, i.e., $\mathbb{M}^3(c)$ is equal to $\mathbb{R}^3$ if $c=0$, $\mathbb{S}^3(c)$ if $c>0$ and $\mathbb{H}^3(c)$ if $c=0$. In this paper we consider the following models for $\mathbb{M}^3(c)$:
\begin{eqnarray*}
\mathbb{R}^3 &=& \left\{x=(x_1,x_2,x_3,x_4) \in \mathbb{R}^4\,\left|\right.\,x_4=0\right\} \\
\mathbb{S}^3(c) &=& \left\{x=(x_1,x_2,x_3,x_4) \in \mathbb{R}^4\,\left|\right.\,x_1^2+x_2^2+x_3^2+x_4^2=\frac{1}{c^2}\right\} \\
\mathbb{H}^3(c) &=& \left\{x=(x_1,x_2,x_3,x_4) \in \mathbb{R}_1^4\,\left|\right.\,x_1^2+x_2^2+x_3^2-x_4^2=-\frac{1}{c^2}, x_4>0\right\}
\end{eqnarray*}
endowed with the pullback of the Euclidean metric for $c \geq 0$ or the Minkowski metric for $c<0$. When $M=\mathbb{M}^3(c)$ we have that $\overline{R}_\eta(X)=cX$ for all $X \in \Gamma(T\mathbb{M}^3(c))$ and $\dive P_1=0$ (see \cite[Theorem 4.1]{rosenberg1993hypersurfaces}). Thus, \eqref{0006} can be rewritten as
\begin{equation}\label{0009}
\left.\frac{\partial}{\partial t}\mathcal{F}_{1,\theta}\left[\Sigma_t\right]\right\vert_{t=0}=\int_\Sigma \left<P_1\nabla f,\nabla f\right>-2H_1\left(H_2+c\right)f^2\,d\mu_\Sigma+\int_{\partial\Sigma}\left\vert{P_1\nu}\right\vert\alpha_\theta f^2\,d\mu_{\partial\Sigma}.
\end{equation}
The equation \eqref{0009} can be viewed as a quadratic form associated to a bilinear symmetric form on the Hilbert space $H^1(\Sigma)$, the closure of $C^\infty(\Sigma)$ with respect to the Sobolev norm $$\left\Vert\cdot\right\Vert_{H^1(\Sigma)}^2=\left\Vert\cdot\right\Vert_{L^2(\Sigma)}^2+\left\Vert\nabla\cdot\right\Vert_{L^2(\Sigma)}^2.$$ The $1$-index form of $\varphi : \Sigma \rightarrow \Omega\subseteq\mathbb{M}^3(c)$ is
\begin{equation}\label{0010}
\mathcal{I}_{1,\theta}(f_1,f_2)=\int_\Sigma \left<P_1\nabla f_1,\nabla f_2\right>-2H_1\left(H_2+c\right)f_1f_2\,d\mu_\Sigma+\int_{\partial\Sigma}\left\vert{P_1\nu}\right\vert\alpha_\theta f_1f_2\,d\mu_{\partial\Sigma},
\end{equation}
where $f_1,f_2 \in H^1(\Sigma)$. $\Sigma$ is strongly $1$-stable if and only if $\mathcal{I}_{1,\theta}(f,f) \geq 0$ for all $f \in H^1(\Sigma)$ and $1$-stable if $\mathcal{I}_{1,\theta}(f,f) \geq 0$ for all $f \in \mathcal{F}=\left\{f \in H^1(\Sigma)\,|\,\int_\Sigma f\,d\mu_\Sigma=0\right\}$. It can be proved that a totally umbilical capillary compact surface supported on a connected totally umbilical surface of $\mathbb{M}^3(c)$ is $1$-stable \cite[Proposition 4.2]{damasceno2022stable}.

\

As in the case $r=0$, when considering $\varphi$ a capillary $(r+1)$-minimal surface, i.e. $H_2=0$, we say that $\varphi$ is $1$-stable if $\mathcal{I}_{1,\theta}(f,f) \geq 0$ for all $f \in C_0^\infty(\Sigma)$. This means the hypothesis on the variation being volume-preserving is dropped.

\

If $f \in \mathcal{F}$, the normal vector field $\xi=f\eta$ on $\Sigma$ is a {Jacobi field} if $f \in \ker\mathcal{I}_{1,\theta}\vert_{\mathcal{F} \times \mathcal{F}}$, i.e., $\mathcal{I}_{1,\theta}(f,g)=0$ for every $g \in \mathcal{F}$. The next lemma, whose proof is in \cite[Lemma 4.4]{damasceno2022stable}, gives a characterization of Jacobi fields on $\Sigma$.

\begin{lema}\label{005.3}
Let $\varphi : \Sigma \rightarrow \Omega \subseteq \mathbb{M}^3(c)$ be a positive definite $H_2$-surface with free boundary in $\partial\Omega$ and $f \in \mathcal{F}$. Then

\begin{enumerate}[i)]
\item $\xi=f\eta$ is a Jacobi field on $\Sigma$ if and only if $f \in C^\infty(\Sigma)$ and
\begin{equation}\label{0011}\begin{dcases*}
T_1f=\text{constant} & in $\Sigma$ \\
\frac{\partial f}{\partial\nu}+\alpha_\theta f=0 & on $\partial\Sigma$
\end{dcases*}.\end{equation}

\item If $\varphi$ is $r$-stable and $\mathcal{I}_{1,\theta}(f,f)=0$ then $f$ is a Jacobi field on $\Sigma$.
\end{enumerate}
\end{lema}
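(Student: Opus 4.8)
The plan is to translate the condition ``$\xi=f\eta$ is a Jacobi field'', i.e.\ $\mathcal{I}_{1,\theta}(f,g)=0$ for all $g\in\mathcal{F}$, into the boundary value problem \eqref{0011} by integrating \eqref{0010} by parts, and then to deduce (ii) from (i) by the standard polarization argument for a nonnegative quadratic form. Two preliminary observations make the integration by parts possible. First, in $\mathbb{M}^3(c)$ we have $\overline{R}_\eta=c\,\id$, so by \eqref{0001} and \eqref{0008} the potential $q_r=\tr\big(P_1(A^2+\overline{R}_\eta)\big)$ equals $2H_1H_2+2cH_1=2H_1(H_2+c)$, which is exactly the zeroth-order coefficient appearing in \eqref{0010}, and moreover $\dive P_1=0$, so that $L_1f=\dive(P_1\nabla f)$. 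Second, since $\partial\Omega$ is totally umbilical and $\varphi$ has free boundary, Lemma \ref{002.4} gives that $\nu$ is a principal direction of $\varphi$; as $P_1$ is positive definite, this yields $P_1\nu=|P_1\nu|\,\nu$, hence $\langle P_1\nabla f,\nu\rangle=|P_1\nu|\,\frac{\partial f}{\partial\nu}$ along $\partial\Sigma$.

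Granting these, for $f\in C^\infty(\Sigma)$ and any $g\in H^1(\Sigma)$ the divergence theorem applied to \eqref{0010} gives
\[
\mathcal{I}_{1,\theta}(f,g)=\int_\Sigma (T_1f)\,g\,d\mu_\Sigma+\int_{\partial\Sigma}|P_1\nu|\Big(\frac{\partial f}{\partial\nu}+\alpha_\theta f\Big)g\,d\mu_{\partial\Sigma}.
\]
The ``if'' direction of (i) is then immediate: if $f$ solves \eqref{0011}, the boundary integral vanishes and the bulk integrand is a constant times $g$, whose integral over any $g\in\mathcal{F}$ is zero; hence $\mathcal{I}_{1,\theta}(f,g)=0$ for all $g\in\mathcal{F}$, and $\xi=f\eta$ is a Jacobi field.

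For the ``only if'' direction I would argue as follows. If $f\in\mathcal{F}$ is a Jacobi field, the map $g\mapsto\mathcal{I}_{1,\theta}(f,g)$ is bounded and linear on $H^1(\Sigma)$ and vanishes on the closed hyperplane $\mathcal{F}=\ker\big(g\mapsto\int_\Sigma g\,d\mu_\Sigma\big)$, so there is a constant $a$ with $\mathcal{I}_{1,\theta}(f,g)=a\int_\Sigma g\,d\mu_\Sigma$ for every $g\in H^1(\Sigma)$. This is precisely the weak formulation of the Robin problem $-L_1f-q_rf=a$ in $\Sigma$, $\frac{\partial f}{\partial\nu}+\alpha_\theta f=0$ on $\partial\Sigma$; here the natural conormal operator for $L_1=\dive(P_1\nabla\,\cdot\,)$ is $\langle P_1\nabla f,\nu\rangle$, which by the principal-direction observation is a smooth positive multiple of $\frac{\partial f}{\partial\nu}$, so the boundary condition is a bona fide Robin condition. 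Since $P_1$ is positive definite and smooth, $-L_1$ is a uniformly elliptic operator with smooth coefficients on the compact surface $\Sigma$, and $q_r,\alpha_\theta$ are smooth, so elliptic regularity up to the boundary yields $f\in C^\infty(\Sigma)$; integrating by parts once more then recovers \eqref{0011} in the classical sense. I expect this boundary-regularity step to be the only delicate point, although it is standard once the conormal direction of $L_1$ has been identified with $\nu$.

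For (ii), $r$-stability means $\mathcal{I}_{1,\theta}(g,g)\ge 0$ for all $g\in\mathcal{F}$. Given $f\in\mathcal{F}$ with $\mathcal{I}_{1,\theta}(f,f)=0$ and an arbitrary $g\in\mathcal{F}$, one has $f+tg\in\mathcal{F}$ and
\[
0\le \mathcal{I}_{1,\theta}(f+tg,f+tg)=2t\,\mathcal{I}_{1,\theta}(f,g)+t^2\,\mathcal{I}_{1,\theta}(g,g)\qquad\text{for all }t\in\mathbb{R},
\]
which forces $\mathcal{I}_{1,\theta}(f,g)=0$. Thus $f\in\ker\mathcal{I}_{1,\theta}|_{\mathcal{F}\times\mathcal{F}}$, i.e.\ $f$ is a Jacobi field, which by (i) is equivalent to \eqref{0011}.
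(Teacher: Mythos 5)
Your argument is correct and is the natural one: the paper does not reprove this lemma (it only cites \cite[Lemma 4.4]{damasceno2022stable}), and your route --- Green's identity for $L_1=\dive(P_1\nabla\cdot)$ using $\dive P_1=0$ and $P_1\nu=\vert P_1\nu\vert\,\nu$ from Lemma \ref{002.4}, the Lagrange-multiplier/weak-formulation plus elliptic regularity step for the ``only if'' direction, and polarization of the nonnegative form for (ii) --- is exactly the standard proof this reduces to. The only point worth flagging is that the identification $P_1\nu=\vert P_1\nu\vert\,\nu$ uses that $\partial\Omega$ is totally umbilical, a hypothesis implicit in the paper's stability framework (it is already needed for \eqref{0006}--\eqref{0010}) but not restated in the lemma; you correctly make it explicit.
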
}

\section{Proof of Theorem \ref{003.1}}\label{tres}
{The Theorem \ref{003.1}} is inspired by its analogous version proved by J. Nitsche in \cite{nitsche1985stationary} when $c=0$ and by R. Souam in \cite[Theorem 4.1]{souam1997stability} when $c \neq 0$, both of them addressing constant mean curvature immersions of disk into a ball in $\mathbb{M}^3(c)$. Here, we consider immersions of the disk into a compact, convex smooth body $\Omega$. In order to prove this result, one needs the following theorem proved by R. Bryant.

\

{\noindent {\bf Theorem A }\cite[Theorem 3]{bryant2011complex}:
Let $\varphi : \Sigma^2 \rightarrow \mathbb{M}^3(c)$ be a smooth immersion that satisfies a Weingarten equation of the form $H_1=f(H_1^2-H_2)$, for some $f \in C^\infty((-\varepsilon,\infty);\mathbb{R})$ and $\varepsilon>0$. Then $\varphi$ is totally umbilical or else, the umbilic locus consists entirely of isolated points of strictly negative index.}

For a definition of index, see \cite[p. 107]{hopf2003differential}. As a direct consequence of the Poincaré-Hopf Theorem for manifolds with boundary \cite[p. 35]{milnor1997topology}, we have a boundary version of Hopf's Theorem:

\

\noindent {\bf Theorem B }(Boundary version of Hopf's Theorem)
Let $\Sigma^2$ be a compact manifold with boundary $\partial\Sigma$ for which the umbilic locus $\mathcal{U}$ is finite. Suppose that one of the principal directions is transversal to $\partial\Sigma$. Then $$\sum_{p \in \mathcal{U}} i(p)=\chi(\Sigma),$$ where $i(p)$ is the index of $p \in \mathcal{U}$.

{\begin{proof}[Proof of Theorem \ref{003.1}]
Since $\varphi : \Sigma^2 \rightarrow \mathbb{M}^3(c)$ is a $H_2$-surface, its mean curvature satisfies a Weingarten equation $H_1=f(H_1^2-H_2)$, where $f(y)=\sqrt{y+H_2}$ and $y \in \left(-H_2,+\infty\right)$. If $\varphi$ is not totally umbilic then, by Theorem A, the umbilical points of $\varphi$ form a finite set $\mathcal{U} \subseteq \Sigma$ and each umbilical point has negative index. Since $\nu$ is a principal direction along $\partial\Sigma$, one can use Theorem B to obtain $$0>\sum_{p \in \mathcal{U}} i(p)=\chi(\Sigma)=1,$$ which is a contradiction. Thus $\Sigma$ is totally umbilical.
\end{proof}}

\begin{obs}\label{003.2}
The same argument holds if $\Sigma$ is a CMC surface since Theorem A still holds in this case (choose $f$ to be a constant function).
\end{obs}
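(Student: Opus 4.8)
The plan is to run the proof of Theorem \ref{003.1} essentially verbatim, changing only which Weingarten function is supplied to Theorem A. Let $\varphi:\Sigma^2\to\Omega\subseteq\mathbb{M}^3(c)$ be a compact disk with free boundary in $\partial\Omega$ and with constant mean curvature $H_1$. Then $\varphi$ satisfies a Weingarten equation $H_1=f(H_1^2-H_2)$ in which $f$ is the \emph{constant} function $f(y)\equiv H_1$; this $f$ is smooth on all of $\mathbb{R}$, hence on $(-\varepsilon,\infty)$ for every $\varepsilon>0$, so the hypothesis of Theorem A is satisfied. Theorem A then gives the dichotomy: either $\varphi$ is totally umbilical — which is the conclusion we want — or the umbilic locus $\mathcal{U}\subseteq\Sigma$ is finite and every point of it has strictly negative index. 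I would assume the second alternative and derive a contradiction.

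To invoke the boundary version of Hopf's theorem (Theorem B) I need one principal direction of $\varphi$ to be transversal to $\partial\Sigma$. This is exactly where the free boundary hypothesis enters: since the contact angle is $\theta=\frac{\pi}{2}$ and $\partial\Omega$ is totally umbilical, Lemma \ref{002.4} shows that the outward unit conormal $\nu$ along $\partial\Sigma$ is a principal direction of $\varphi$, and $\nu$ is transversal to $\partial\Sigma$ because it is tangent to $\Sigma$ but orthogonal to $\partial\Sigma$. Applying Theorem B then yields
\[
0>\sum_{p\in\mathcal{U}}i(p)=\chi(\Sigma)=1,
\]
since a closed disk has Euler characteristic $1$. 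This contradiction rules out the second alternative, so $\varphi(\Sigma)$ is totally umbilical.

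I do not expect a genuine obstacle here: the entire content of the remark is that a constant function is an admissible Weingarten function in Bryant's theorem, so the topological argument of Theorem \ref{003.1} carries over unchanged. The only step worth spelling out is that the free boundary condition together with the umbilicity of $\partial\Omega$ still forces a principal direction of $\varphi$ to meet $\partial\Sigma$ transversally (via Lemma \ref{002.4}), so that the Poincar\'e-Hopf count in Theorem B remains valid — exactly as in the $H_2$ case.
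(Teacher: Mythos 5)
Your proposal is correct and matches the paper's intent exactly: the remark's entire content is that the constant function $f(y)\equiv H_1$ is an admissible Weingarten function for Theorem A, after which the index-count argument of Theorem \ref{003.1} (Lemma \ref{002.4} giving $\nu$ as a principal direction, then Theorem B forcing $\sum_{p\in\mathcal{U}}i(p)=\chi(\Sigma)=1>0$) goes through verbatim. Nothing further is needed.
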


\section{Proof of Theorem \ref{006.1}}\label{quatro}
{In this section we will prove Theorem \ref{006.1}. The geodesic ball $B_R$ is convex for all $R \in (0,R_c)$, where $R_c=+\infty$ if $c \leq 0$ and $R_c=\frac{\pi}{2\sqrt{c}}$ if $c>0$. Its boundary $\partial B_R$ is a totally umbilical sphere whose mean curvature with respect to the inward unit normal is equal to $\frac{\cn_c(R)}{\sn_c(R)}$.

One must state some identities that will be used throughout the proof.}

\begin{lema}\label{006.2}
Let $\varphi : \Sigma^2 \rightarrow \mathbb{M}^3(c)$ be a surface. Then
\begin{eqnarray}
L_1\varphi &=& 2H_2\eta-2cH_1\varphi \label{0015} \\
L_1\eta &=& -\tr\left(P_1A^2\right)\eta+2cH_2\varphi-\nabla H_2, \label{0016}
\end{eqnarray}
Here $L_1\varphi$ and $L_1\eta$ are calculated coordinate-wise.
\end{lema}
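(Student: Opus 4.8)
\textbf{Proof proposal for Lemma \ref{006.2}.}

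The plan is to compute $L_1$ acting on the position vector $\varphi$ and on the Gauss map $\eta$ componentwise, exploiting the well-known formulas for the Laplacian on submanifolds of space forms together with the defining relation $L_1 f = \tr(P_1(\hess f)^\flat)$ and the fact that $\dive P_1 = 0$ in $\mathbb{M}^3(c)$ (by \cite[Theorem 4.1]{rosenberg1993hypersurfaces}, as recalled in the Preliminaries). Concretely, I would work in the ambient linear model $\mathbb{R}^4$ (or $\mathbb{R}^4_1$ for $c<0$) so that $\varphi$ and $\eta$ are genuine $\mathbb{R}^4$-valued functions, and recall the classical identities $\Delta \varphi = 2H_1\eta - 2c\varphi$ and, more generally, that for the tangential part $\overline\nabla_X \varphi = X$ and $\overline\nabla_X \eta = -A X$. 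From $\overline\nabla_X\varphi = X$ one gets $(\hess \varphi)^\flat$ applied to $X$ essentially equals the second fundamental form term plus a multiple of $\varphi$; tracing against $P_1$ and using \eqref{0001}, \eqref{0003} gives $L_1\varphi = (\tr P_1 A)\,\eta - c(\tr P_1)\,\varphi = 2H_2\eta - 2cH_1\varphi$, which is \eqref{0015}.

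For \eqref{0016}, I would similarly start from $\overline\nabla_X \eta = -AX$, so that the second covariant derivative of $\eta$ in the direction $X$ (projected appropriately) produces a term involving $\overline\nabla_X(AX)$ — which by the Codazzi equation in a space form is symmetric — plus a curvature/normal correction. The key algebraic step is: the tangential part of $\hess \eta$ traced against $P_1$ yields $-\nabla H_2$ after using Codazzi (here one needs $\dive(P_1 A) $ expressed via $\nabla H_2$, or equivalently one uses $\tr(P_1 \nabla A) = \nabla(\tr P_1 A)/?$ carefully — the precise bookkeeping is that $\tr(P_1(\nabla_X A)) = \langle \nabla H_2, X\rangle$ together with $\dive P_1=0$); the normal part contributes $-\tr(P_1 A^2)\,\eta$ coming from $\langle \overline\nabla_X \eta, AX\rangle = -\langle AX, AX\rangle$ type terms summed against $P_1$; and the $\varphi$-component is fixed by differentiating the constraint $\langle \eta,\varphi\rangle = 0$ (and $\langle\varphi,\varphi\rangle = \pm 1/c^2$, $\langle\eta,\eta\rangle=1$), which forces the coefficient of $\varphi$ to be $\langle L_1\eta,\varphi\rangle = -\langle \eta, L_1\varphi\rangle$-type reasoning, ultimately giving $2cH_2$ via \eqref{0015}. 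Assembling these three components yields \eqref{0016}.

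I expect the main obstacle to be the careful handling of the tangential component in the computation of $L_1\eta$: one must correctly invoke the Codazzi equations (valid because the ambient curvature is constant) to see that the antisymmetric-looking terms cancel and that $\tr(P_1 \cdot \nabla A)$ collapses precisely to $\nabla H_2$, using $\dive P_1 = 0$. A clean way to organize this is to write, for a local orthonormal frame $\{e_1,e_2\}$ diagonalizing $A$, $L_1\eta = \sum_i \langle P_1 e_i, e_i\rangle\, \overline\nabla_{e_i}\overline\nabla_{e_i}\eta$ modulo lower-order connection terms, then decompose each $\overline\nabla_{e_i}\overline\nabla_{e_i}\eta$ into its $T\Sigma$, $\eta$, and $\varphi$ parts and sum. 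The Euclidean case $c=0$ (where the $\varphi$-terms drop and $\mathbb{M}^3(0)=\mathbb{R}^3$) serves as a useful consistency check, and the spherical/hyperbolic cases differ only by tracking the constraint-induced $\varphi$-terms. Alternatively — and perhaps more transparently — one can derive both identities at once by taking a variation of $\varphi$ through the flow generated by a Killing-type or position field and reading off $L_1$ from the known first-variation formula \eqref{0004} for $H_2$ specialized to these canonical test functions; but the direct frame computation is the most self-contained and is the route I would follow.
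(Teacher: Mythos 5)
Your proposal is correct and follows essentially the same route as the proof the paper relies on: the paper does not prove Lemma \ref{006.2} itself but defers to \cite[Remark 5.1]{rosenberg1993hypersurfaces}, where precisely this coordinatewise computation is carried out (Hessian of the coordinate functions for \eqref{0015}; Codazzi plus $\tr\left(P_1\nabla_XA\right)=\left<\nabla H_2,X\right>$ for the tangential part, $-\tr\left(P_1A^2\right)$ for the normal part, and $c\tr\left(P_1A\right)=2cH_2$ for the $\varphi$-part of \eqref{0016}). One minor caution: your alternative determination of the $\varphi$-component via ``$\left<L_1\eta,\varphi\right>=-\left<\eta,L_1\varphi\right>$-type reasoning'' is not literally right, since the product rule $L_1(uv)=uL_1v+vL_1u+2\left<P_1\nabla u,\nabla v\right>$ contributes the cross term $2\sum_a\left<P_1\nabla\eta^a,\nabla\varphi^a\right>=-2\tr\left(P_1A\right)=-4H_2$, without which the coefficient of $\varphi$ comes out with the wrong sign --- but your primary route (reading the $\varphi$-part directly off the decomposition of $\overline\nabla_Xe_a^\top$) yields $2cH_2$ correctly, so the argument as a whole stands.
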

For a proof of \eqref{0015} and \eqref{0016} see \cite[Remark 5.1]{rosenberg1993hypersurfaces}.

The next Lemma also has key role for the proof of Theorem \ref{006.1}.
\begin{lema}\label{006.3}
Suppose that $\varphi : \Sigma \rightarrow \Omega \subseteq M$ is a surface such that $H_2>0$ and let $u \in C^\infty(\Sigma) \backslash\{0\}$ be a function such that $T_1u=0$. Then its nodal set $\Gamma=u^{-1}(\{0\})$ is a finite graph whose vertices are the critical points of $u$. In a neighborhood of each critical point $\Gamma$ is a star of at least two branches.
\end{lema}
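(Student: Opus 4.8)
The plan is to recognize $T_1 u = -L_1 u - q_1 u$ as (minus) an elliptic operator in the sense described right after the definition of $L_1$: since $H_2 > 0$, after the appropriate choice of orientation $P_1$ is positive definite on all of $\Sigma$ (by \cite[Lemma 3.10]{elbert2002constant}), so $-L_1$ is a genuine second-order elliptic operator with smooth coefficients, and $-T_1 = L_1 + q_1$ is a Schr\"odinger-type operator of the form $L_1 + (\text{zeroth order})$. The equation $T_1 u = 0$ is therefore a linear, homogeneous, second-order elliptic PDE. The structural facts we need about the nodal set of such a solution are local and are classical: they follow from the fact that, in suitable local coordinates where $-L_1$ has principal part a Laplacian, a solution which vanishes to order $k$ at a point $p$ behaves, to leading order, like a homogeneous harmonic polynomial of degree $k$ in $2$ variables. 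This is the content of the local structure results of Bers, and of Hartman--Wintner, and in the form we want, of Cheng's analysis of nodal sets in dimension two.

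The key steps, in order, would be: (1) Fix the orientation so that $P_1 > 0$ everywhere and observe that $T_1 u = 0$ is $L_1 u + q_1 u = 0$ with $-L_1$ uniformly elliptic on relatively compact pieces and $q_1 \in C^\infty(\Sigma)$; hence by elliptic regularity $u$ is smooth, which is already used implicitly. (2) Invoke the local Bers/Hartman--Wintner expansion: near any zero $p$ of $u$, either $u$ has a nonzero first-order part (so $\nabla u(p) \neq 0$ and, by the implicit function theorem, $\Gamma$ is near $p$ a single smooth arc through $p$), or $\nabla u(p) = 0$ and the lowest-order nonvanishing homogeneous part of $u$ in geodesic normal-type coordinates adapted to $P_1$ is (a nonzero multiple of) a homogeneous harmonic polynomial $p_k$ of degree $k \geq 2$; in the latter case $\Gamma$ near $p$ is $C^1$-diffeomorphic to the zero set of $p_k$, i.e.\ to $2k$ rays emanating from $p$ meeting at equal angles $\pi/k$. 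This gives exactly the statement: the vertices of $\Gamma$ are precisely the critical points of $u$, and at each vertex $\Gamma$ is a star of $2k \geq 4 > 2$ branches, while away from critical points $\Gamma$ is a $1$-dimensional embedded submanifold. (3) Conclude that $\Gamma$ has the structure of a finite graph: its critical points are isolated (the zero set of the harmonic part $p_k$ meets a small circle in exactly $2k$ points, so a punctured neighborhood of $p$ contains no other critical point of $u$ on $\Gamma$ by unique continuation), and since $\Sigma$ is compact (a disk, in the application), there are only finitely many of them, and $\Gamma$ is a compact $1$-complex, i.e.\ a finite graph, with the critical points as vertices and the connecting arcs as edges.

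I would organize the write-up to lean on the two-dimensionality of $\Sigma$ throughout, since that is what makes the ``harmonic polynomial in two variables'' picture work and forces the ``star of at least two branches'' conclusion. The main obstacle, and the only genuinely nontrivial input, is justifying the local normal form for solutions of $L_1 u + q_1 u = 0$ at a critical zero: one has to arrange local coordinates in which the leading symbol of $-L_1$ is the flat Laplacian (possible since $P_1$ is a smooth positive-definite symmetric $2$-tensor, so $\sqrt{P_1}^{-1}$ provides the change of frame), check that lower-order terms and the potential $q_1$ do not affect the leading homogeneous part, and then quote the Bers--Hartman--Wintner theorem that the leading part is harmonic. I would cite Cheng \cite{} or Bers for this rather than reprove it; alternatively, one can phrase the whole lemma as an application of the known structure theorem for nodal sets of solutions of second-order elliptic equations in the plane, in which case the proof reduces to verifying that $T_1$ is of the required type, which is immediate from $P_1 > 0$ and smoothness of the coefficients. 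A secondary, purely bookkeeping point is isolatedness and finiteness of the vertices, which follows from the local picture plus compactness and the strong unique continuation property of $T_1$ (no open set of zeros, hence $\Gamma$ has empty interior and its singular set is discrete).
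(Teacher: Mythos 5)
Your proposal is correct and follows essentially the same route as the paper: since $H_2>0$ makes $P_1$ positive definite, one changes local coordinates so that the principal part of $L_1$ becomes the flat Laplacian (the paper cites Epstein's PDE text for this reduction) and then invokes Cheng's two-dimensional nodal set structure theorem (equivalently, the Bers/Hartman--Wintner local expansion by harmonic polynomials), with compactness giving finiteness of the graph. The only difference is expository: you unpack the content of Cheng's theorem (leading homogeneous harmonic part, $2k$ equiangular rays at a zero of order $k$), whereas the paper simply quotes it.
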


\begin{proof}
Let $p \in \Sigma$ and take $\varphi : U_p \subseteq \mathbb{R}^2 \rightarrow \Sigma$ a parametrization of $\Sigma$ at $p$ with local coordinates $(u,v)$. Since $L_1$ is a second-order elliptic differential operator only with principal part, it follows from PDE theory in \cite[Chapter 3]{epstein1962partial} that there exists a coordinate change $$\overline{u}=h_1(u,v), \quad \overline{v}=h_2(u,v)$$ of class $C^2$ in a neighborhood of $p_0=\varphi^{-1}(p)$ whose Jacobian does not vanish at $p_0$ that transforms the pullback of $L_1$ in the Laplacian operator. We may suppose (restricting $U_p$ if necessary) that $\left(\overline{u},\overline{v}\right)$ is a diffeomorphism in $U_p$. Thus, in the new coordinates $(\overline{u},\overline{v})$, $L_1$ is the Laplacian and \cite[Theorem 2.5]{cheng1976eigenfunctions} implies that its nodal lines in $U_p$ meet at the critical points. Since $\Sigma$ is compact, we can cover $\Sigma$ with finitely many such open neighborhoods $U_p$, proving the claim.
\end{proof}

\begin{proof}[Proof of Theorem \ref{006.1}]
{The proof is an extension of that in \cite[Theorem 11]{ros1995stability}. From \cite[Lemma 1.1]{ros1997stability}, $\left(\emph{II}_{\partial B}\right)_{\overline\eta}(\overline\nu,\overline\nu)=-\frac{\cn_c(R)}{\sn_c(R)}$ and the curvature of $\partial\Sigma$ in $\Sigma$ is equal to $\frac{\cn_c(R)}{\sn_c(R)}$. Thus, the Gauss-Bonnet Theorem implies that $$2\pi\chi(\Sigma)=\int_\Sigma K\,d\mu_\Sigma+\int_{\partial\Sigma}\kappa_g\,d\mu_{\partial\Sigma}=\int_\Sigma H_2+c\,d\mu_\Sigma+\int_{\partial\Sigma}\kappa_g\,d\mu_{\partial\Sigma} >cA(\Sigma)+\frac{\cn_c(R)}{\sn_c(R)}\ell(\partial\Sigma).$$ In all three cases the inequality above implies that the genus of $\Sigma$ is equal to zero.}

Now consider the case $c=0$ and suppose, without loss of generality, that $B_R$ is the unit ball centered at the origin of $\mathbb{R}^3$. Let $p_0 \in \Sigma$ be a point where the function $p \in \Sigma \mapsto \vert\varphi(p)\vert$ attains its minimum and define the function
\begin{equation}\label{0019}
f(p)=\left<\varphi(p) \wedge \eta(p_0),\eta(p)\right>, \quad p \in \Sigma,
\end{equation}
where $\wedge$ denotes the cross product in $\mathbb{R}^3$. It is clear that $f(p_0)=0$ and for all $\mathbf{v} \in T\Sigma$,
\begin{equation*}
\left<\nabla f,\mathbf{v}\right>= \mathbf{v}\left<\varphi \wedge \eta(p_0),\eta\right>=\left<\mathbf{v} \wedge \eta(p_0),\eta\right>+\left<\varphi \wedge \eta(p_0),\overline\nabla_{\mathbf{v}}\eta\right>=\left<\eta(p_0) \wedge \eta-A\left(\varphi \wedge \eta(p_0)\right)^\top,\mathbf{v}\right>.
\end{equation*}
Thus $\nabla f=\eta(p_0) \wedge \eta-A\left(\varphi \wedge \eta(p_0)\right)^\top$, where $^\top$ denotes the projection onto $T\Sigma$. Since $\left\vert\varphi\right\vert$ attains its minimum at $p_0$, we have $\varphi(p_0) \parallel \eta(p_0)$, hence $\nabla f(p_0)=0$. Also we have
\begin{eqnarray*}
L_1f &=& L_1\left<\varphi \wedge \eta(p_0),\eta\right>=L_1\left<\eta \wedge \varphi,\eta(p_0)\right> \\
&=& L_1\left<\left(\eta^2\varphi^3-\eta^3\varphi^2,\eta^3\varphi^1-\eta^1\varphi^3,\eta^1\varphi^2-\eta^2\varphi^1\right),\eta(p_0)\right> \\
&=& \left<\left(L_1\left(\eta^2\varphi^3-\eta^3\varphi^2\right),L_1\left(\eta^3\varphi^1-\eta^1\varphi^3\right),L_1\left(\eta^1\varphi^2-\eta^2\varphi^1\right)\right),\eta(p_0)\right>,
\end{eqnarray*}
where $\varphi^i=\left<\varphi,\mathbf{e}_i\right>$, $\eta^i=\left<\eta,\mathbf{e}_i\right>$, $i \in \{1,2,3\}$, and the vectors $\{\mathbf{e}_1,\mathbf{e}_2,\mathbf{e}_3\}$ form the canonical basis of $\mathbb{R}^3$. Using \eqref{0015} and \eqref{0016}, we have for $i,j \in \{1,2,3\}$,
\begin{eqnarray*}
L_1\left(\eta^i\varphi^j\right) &=& \varphi^jL_1\eta^i+\eta^iL_1\varphi^j+2\left<P_1\nabla\eta^i,\nabla\varphi^j\right> \\
&=& -2H_1H_2\eta^i\varphi^j+2H_2\eta^i\eta^j-2\left<P_1A\mathbf{e}_i^\top,\mathbf{e}_j^\top\right> \\
L_1\left(\eta^i\varphi^j-\eta^j\varphi^i\right) &=& -2H_1H_2\left(\eta^i\varphi^j-\eta^j\varphi^i\right).
\end{eqnarray*}
Thus, $L_1f+2H_1H_2f=0$ on $\Sigma$. Moreover, since $\varphi=\nu$ on $\partial\Sigma$, the Lemma \ref{002.4} implies that, on $\partial\Sigma$
\begin{eqnarray*}
\frac{\partial f}{\partial\nu} &=& \left<\nabla f,\nu\right>=\left<\eta(p_0) \wedge \eta-A(\varphi \wedge \eta(p_0))^\top,\nu\right> \\
&=& \left<\nu \wedge \eta(p_0),\eta\right>-\left<\nu \wedge \eta(p_0),A\nu\right> \\
&=& f-\left\vert{A\nu}\right\vert\left<\nu \wedge \eta(p_0),\nu\right>=f.
\end{eqnarray*}
Hence the function $f$ satisfies
\begin{equation}\label{0020}
\begin{dcases*}
L_1f+2H_1H_2f=0& in $\Sigma$ \\
\frac{\partial f}{\partial\nu}-f=0& on $\partial\Sigma$
\end{dcases*}.\end{equation}

We claim that $f \equiv 0$ on $\Sigma$. Otherwise, Lemma \ref{006.3} implies that the lines of the nodal set $f^{-1}(\{0\})$ meet at the critical points of $f$. Using the Gauss-Bonnet theorem for each connected component $\Sigma_i$ of $\Sigma \backslash \beta^{-1}(\{0\})$, we have
\begin{equation}\label{0021}
\int_{\Sigma_i} K\,d\mu_\Sigma=2\pi\chi(\Sigma_i)-\int_{\partial\Sigma_i} \kappa_g\,d\mu_{\partial\Sigma}-\sum_j \theta_{ij},
\end{equation}
where $\theta_{ij}$, $j \in \{1,...,j_i\}$ denotes the external angles of $\Sigma_i$. Summing up \eqref{0021} for all $i$, we obtain
\begin{eqnarray}
\int_\Sigma K\,d\mu_\Sigma &=& \sum_i \int_{\Sigma_i} K\,d\mu_\Sigma=\sum_i \left(2\pi\chi(\Sigma_i)-\int_{\partial\Sigma_i} \kappa_g\,d\mu_{\partial\Sigma}-\sum_j \theta_{ij}\right) \nonumber \\
&=& 2\pi\sum_i \chi(\Sigma_i)-\int_{\partial\Sigma} \kappa_g\,d\mu_{\partial\Sigma}-\sum_l \theta_l, \label{0022}
\end{eqnarray}
where the last term means the sum of all external angles for every connected component $\Sigma_i$. Since $\partial\Sigma$ is smooth, it follows again from the Gauss-Bonnet theorem that \eqref{0022} implies to
\begin{eqnarray*}
2\pi\left(2-2g-s\right) &=& 2\pi\chi(\Sigma)=\int_\Sigma K\,d\mu_\Sigma+\int_{\partial\Sigma} \kappa_g\,d\mu_{\partial\Sigma} \\
&=& 2\pi\sum_i \chi(\Sigma_i)-\sum_l \theta_l,
\end{eqnarray*}
where $s$ is the number of components of $\partial\Sigma$. Since $f(p_0)=0$ and $\nabla f(p_0)=0$, the Lemma \ref{006.3} implies there are at least two nodal lines of $f$ intersecting at $p_0$ and forming a star at $p_0$; so $\sum_l \theta_l \geq 2\pi$. On the other hand, on each connected component $\Gamma_i$ of $\partial\Sigma$, $i \in \{1,...,s\}$, choosing a positively oriented arclength parametrization $\gamma$ we have $\varphi \wedge \eta=-\gamma^\prime$. So
\begin{equation*}
\int_{\Gamma_i}f\,d\mu_{\partial\Sigma}=\int_{\Gamma_i} \left<\varphi\wedge\eta(p_0),\eta\right>\,d\mu_{\partial\Sigma}=-\int_{\Gamma_i} \left<\varphi\wedge\eta,\eta(p_0)\right>\,d\mu_{\partial\Sigma}=\int_{\Gamma_i} \left<\gamma^\prime,\eta(p_0)\right>\,d\mu_{\partial\Sigma}=0,
\end{equation*}
and it follows that $f$ has at least two zeroes on each component $\Gamma_i$. Each point of $f^{-1}\left(\{0\}\right) \cap \Gamma_i$ contributes with at least $\pi$ for the sum of the $\theta_j$ in the last equation. Putting things together, we have
\begin{equation}\label{0023}
\sum_l \theta_l \geq 2\pi\left(1+s\right)
\end{equation}
and using \eqref{0023} in \eqref{0022} we obtain $$\sum_i \chi(\Sigma_i)=\frac{1}{2\pi}\left(2\pi\left(2-2g-s\right)+\sum_l \theta_l\right) \geq 2-2g-s+1+s=3-2g.$$ Assuming that $\Sigma$ has genus $g=0$, it follows that $\Sigma\backslash f^{-1}\left(\{0\}\right)$ has at least three connected components. If $\Sigma_1$ and $\Sigma_2$ are two connected components of the nodal domain of $f$, define $$\widetilde{f}=\begin{dcases*}f& in $\Sigma_1$ \\ \alpha f& in $\Sigma_2$ \\ 0& in $\Sigma \backslash (\Sigma_1 \cup \Sigma_2)$\end{dcases*},$$ where $\alpha \in \mathbb{R}$ is such that $\widetilde{f} \in \mathcal{F}$. Since $\partial\Sigma_i \cap \partial\Sigma=\partial\Sigma \cap \Sigma_i$ and $\widetilde{f} \equiv 0$ outside $\Sigma_i$, we have
\begin{eqnarray*}
\int_{\Sigma_1} \left<P_1\nabla\widetilde{f},\nabla\widetilde{f}\right>-2H_1H_2\widetilde{f}^2\,d\mu_\Sigma &=& \int_{\Sigma_1} \left<P_1\nabla\widetilde{f},\nabla f\right>-2H_1H_2\widetilde{f}f\,d\mu_\Sigma \\
&=& -\int_{\Sigma} \widetilde{f}\left(L_1f+2H_1H_2f\right)\,d\mu_\Sigma+\int_{\partial\Sigma \cap \Sigma_1}\vert{P_1\nu}\vert\widetilde{f}\frac{\partial f}{\partial\nu}\,d\mu_{\partial\Sigma} \\
&=& \int_{\partial\Sigma \cap \Sigma_1} \left\vert{P_1\nu}\right\vert \widetilde{f}^2\,d\mu_{\partial\Sigma}
\end{eqnarray*}
and, similarly, $$\int_{\Sigma_2}\left<P_1\nabla\widetilde{f},\nabla\widetilde{f}\right>-2H_1H_2\widetilde{f}^2\,d\mu_\Sigma=\int_{\partial\Sigma \cap \Sigma_2}\left\vert{P_1\nu}\right\vert\widetilde{f}^2\,d\mu_{\partial\Sigma}.$$ Thus, $$\mathcal{I}_1(\widetilde{f},\widetilde{f})=\sum_{i=1}^2 \int_{\Sigma_i} \left<P_1\nabla\widetilde{f},\nabla\widetilde{f}\right>-2H_1H_2\widetilde{f}^2\,d\mu_\Sigma-\int_{\partial\Sigma \cap \Sigma_i}\left\vert{P_1\nu}\right\vert\widetilde{f}^2\,d\mu_{\partial\Sigma}=0.$$ Hence, the second item of Lemma \ref{005.3} implies that $\widetilde{f}$ is a Jacobi field on $\Sigma$. But since $\widetilde{f} \equiv 0$ outside of $\Sigma_1 \cap \Sigma_2$, the Aronszajn's unique continuation principle \cite{aronszajn1956unique} implies that $\widetilde{f} \equiv 0$, which is a contradiction.

Finally, since $f \equiv 0$, the Killing field $p \in \mathbb{R}^3 \mapsto p \wedge \eta(p_0) \in \mathbb{R}^3$ is tangent to $\Sigma$. Hence, $\Sigma$ is a rotation surface around the axis $\eta(p_0)$ with fixed point $p_0$ and thus, $\Sigma$ must be homeomorphic to a disk. Using Theorem \ref{003.1}, we conclude that $\Sigma$ is totally umbilical.

The non-Euclidean cases use similar arguments and, since the spherical and the hyperbolic cases are very similar, we will give a sketch of the proof only when $c=-1$. Using the same notation used in \cite[Theorem 5.1]{souam1997stability}, define $f : \Sigma \rightarrow \mathbb{R}$ by $$f(p)=\left<\varphi(p) \wedge \eta(p_0) \wedge \mathbf{e}_4,\eta(p)\right>.$$ The same arguments used in the Euclidean case gives $\nabla f(p_0)=0$ and $$\begin{dcases*}L_1f+2(H_1H_2-1)f=0 & in $\Sigma$ \\ \frac{\partial f}{\partial\nu}-\frac{\cn_c(R)}{\sn_c(R)}f=0& on $\partial\Sigma$\end{dcases*}.$$ It can also be shown that $f \equiv 0$ and to prove this claim, it is considered a positively oriented arclength parametrization $\gamma$ of a connected component $\Gamma_i$ of $\partial\Sigma$ satisfying $\varphi \wedge \eta \wedge \nu=-\gamma^\prime$. The identity $f \equiv 0$ implies that $\Sigma$ is a rotation surface in $\mathbb{R}^4$ around the plane generated by $\mathbf{e}_4$ and $\eta(p_0)$ with fixed point $p_0$, proving that $\Sigma$ is a disk.
\end{proof}

\section{Proof of Theorem \ref{008.1}}\label{cinco}
In this section we will extend \cite[Theorem 3.1]{ainouz2016stable} for $1$-stable $H_2$-surfaces with free boundary in a slab of $\mathbb{R}^3$, $H_2>0$ and genus $0$.

\begin{proof}[Proof of Theorem \ref{008.1}]
The proof of this result is an adaptation to the arguments used in \cite[Theorem 3.1]{ainouz2016stable}. Without loss of generality, one can suppose that $\Pi_1=\{x_3=0\}$ and $\Pi_2=\{x_3=1\}$. Let $\Gamma$ be a connected component of $\partial\Sigma$ such that $\varphi(\Gamma)$ lies on $\Pi_1$ and consider in this plane the circumscribed circle $\mathscr{C}$ about $\varphi(\Gamma)$. We will prove that $\varphi(\Sigma)$ is a surface of revolution around the vertical axis passing through the center of $\mathscr{C}$.

Assuming, without loss of generality, the center of $\mathscr{C}$ is the origin of $\mathbb{R}^3$ and consider the function $f(p)=\left<\varphi(p) \wedge \mathbf{e}_3,\eta(p)\right>$, $p \in \Sigma$, where $\wedge$ is the cross product of $\mathbb{R}^3$. A similar computation to the one in Theorem \ref{006.1} to obtain \eqref{0020} shows that $$\begin{dcases*}L_1f+2H_1H_2f=0 & in $\Sigma$ \\ \frac{\partial f}{\partial\nu}=0 & on $\partial\Sigma$\end{dcases*}.$$ The proof is finished if one can show that $f \equiv 0$.

Suppose, otherwise, that $f \not\equiv 0$. Then Lemma \ref{006.3} implies its nodal set $f^{-1}(\{0\})$ is a graph whose vertices are the critical points of $f$. We must show the nodal domain $\Sigma \backslash f^{-1}(\{0\})$ has at least $3$ connected components. If the function $f$ does not change its sign in a neighborhood of a point $p_0 \in f^{-1}(\{0\}) \cap \partial\Sigma$ then, as $L_1f=-2H_1H_2f$, the strong maximum principle \cite[Theorem 3.5]{gilbarg1977elliptic} and the Hopf Lemma \cite[Lemma 3.4]{gilbarg1977elliptic} implies that $\frac{\partial f}{\partial\nu}(p_0) \neq 0$ unless $f \equiv 0$ in a neighborhood of $p_0$, thus $f \equiv 0$ by Aronszajn's unique continuation principle \cite{aronszajn1956unique}. In both cases this leads to a contradiction, therefore the nodal domain has at least two connected components. The same arguments used in \cite[Theorem 3.1]{ainouz2016stable} to prove the nodal domain has a third connected component are valid here.

Denoting $\Sigma_1$ and $\Sigma_2$ two of these components, define the function $$\widetilde{f}=\begin{dcases*} f& in $\Sigma_1$ \\ \alpha f& in $\Sigma_2$ \\ 0 & in $\Sigma \backslash (\Sigma_1 \cup \Sigma_2)$\end{dcases*},$$ where $\alpha \in \mathbb{R}$ is such that $\widetilde{f} \in \mathcal{F}$. Since $\partial\Sigma_i \cap \partial\Sigma=\partial\Sigma \cap \Sigma_i$ and $\widetilde{f} \equiv 0$ outside $\Sigma_i$, we obtain
\begin{eqnarray*}
\int_{\Sigma_1} \left<P_1\nabla\widetilde{f},\nabla\widetilde{f}\right>-2H_1H_2\widetilde{f}^2\,d\mu_\Sigma &=& \int_{\Sigma_1} \left<P_1\nabla\widetilde{f},\nabla f\right>-2H_1H_2\widetilde{f}f\,d\mu_\Sigma \\
&=& -\int_{\Sigma_1} \widetilde{f}\left(L_1f+2H_1H_2f\right)\,d\mu_\Sigma+\int_{\partial\Sigma \cap \Sigma_1} \left\vert{P_1\nu}\right\vert\widetilde{f}\frac{\partial f}{\partial\nu}\,d\mu_\Sigma \\
&=& 0
\end{eqnarray*}
and, similarly, $\int_{\Sigma_2} \left<P_1\nabla\widetilde{f},\nabla\widetilde{f}\right>-2H_1H_2\widetilde{f}^2\,d\mu_\Sigma=0$. Thus, $$\mathcal{I}_1(\widetilde{f},\widetilde{f})=\sum_{i=1}^2 \int_{\Sigma_i} \left<P_1\nabla\widetilde{f},\nabla\widetilde{f}\right>-2H_1H_2\widetilde{f}^2\,d\mu_\Sigma=0.$$ and since $\Sigma$ is $r$-stable, Lemma \ref{005.3} implies that $\widetilde{f}$ is a Jacobi field on $\Sigma$. However, since $\widetilde{f}$ vanishes on $\Sigma \backslash (\Sigma_1 \cup \Sigma_2)$, it follows from Aronszajn's unique continuation principle that $\widetilde{f}=0$, which is a contradiction. Therefore $f \equiv 0$ and $\varphi(\Sigma)$ is a surface of revolution around the $x_3$-axis.
\end{proof}

\bibliographystyle{acm}
\bibliography{bibliography-articles}
\end{document}